\documentclass[12pt, twoside, leqno]{article}



\usepackage{amsmath,amsthm}
\usepackage{amssymb}

\usepackage{enumitem}

\usepackage{graphicx}

\usepackage[T1]{fontenc}
\usepackage[utf8]{inputenc}
\usepackage{amsmath,amssymb,amsthm,latexsym,graphicx}
\usepackage[normalem]{ulem}
\usepackage{setspace} 
\usepackage{hyperref}
\usepackage{cancel}
\RequirePackage{doi}
\usepackage{hyperref}
\usepackage{doi}

\usepackage{titlesec}
 
\usepackage{lipsum}
\allowdisplaybreaks
 
\usepackage{url}

\newcommand{\RR}{{\mathbb R}}

\newcommand{\ZZ}{{\mathbb Z}}

\newcommand{\bc}{\begin{center}}
\newcommand{\ec}{\end{center}}
\newcommand{\be}{\begin{enumerate}}
\newcommand{\ee}{\end{enumerate}}
\newcommand{\bi}{\begin{itemize}}
\newcommand{\ei}{\end{itemize}}

\newcommand{\beq}{\begin{equation}}
   
\newcommand{\eeq}{\end{equation}}
\newtheorem{exmp}{Example}
\newcommand{\bex}{\begin{exmp}}
	\newcommand{\eex}{\end{exmp}}


\newcommand{\pdot}{{p(\cdot)}}





\newcommand{\Sol}[1] {\textbf{Solution:}}

\newcommand {\abs}[1] {\lvert#1\rvert}

\newcommand{\normb}[1]{\ensuremath{\left\lVert#1\right\rVert }}

\newcommand\frightarrow{\scalebox{1}[.3]{$\rule[.45ex]{2ex}{1.5pt}%
		\kern-.2ex{\blacktriangleright}$}}

\newcommand{\vertiii}[1]{{\left\vert\kern-0.25ex\left\vert\kern-0.25ex\left\vert #1 
		\right\vert\kern-0.25ex\right\vert\kern-0.25ex\right\vert}}


\newcommand{\ellw}{\ell_w}

\newcommand{\seq}[1]{\ensuremath{ \left \{ #1(n): n \in \ZZ \right \}}}


\allowdisplaybreaks


\pagestyle{myheadings}
\markboth{Sri Sakti Swarup Anupindi and A. Michael Alphonse}{Boundedness of Maximal Ergodic Operator}



\newtheorem{theorem}{Theorem}[section]

\newtheorem{lemma}[theorem]{Lemma}



\theoremstyle{definition}
\newtheorem{definition}[theorem]{Definition}



\numberwithin{equation}{section}


\frenchspacing

\textwidth=13.5cm
\textheight=23cm
\parindent=16pt
\oddsidemargin=-0.5cm
\evensidemargin=-0.5cm
\topmargin=-0.5cm





\begin{document}
\baselineskip=17pt

\title{{\bf Maximal Ergodic Theorem On Weighted $L^p_w(X)$ spaces}}
\author{Sri Sakti Swarup Anupindi\\
Department of Mathematics\\ 
 Birla Institute of Technology and Science-PILANI, Jawahar Nagar\\
Hyderabad-500 078, Telangana, India.\\
E-mail: p20180442@hyderabad.bits-pilani.ac.in
\and 
A.Michael Alphonse\\
Department of Mathematics\\ 
 Birla Institute of Technology and Science-PILANI, Jawahar Nagar\\
Hyderabad-500 078, Telangana, India. \\
E-mail: alphonse@hyderabad.bits-pilani.ac.in}

\date{\today}
\maketitle

\renewcommand{\thefootnote}{}
\let\thefootnote\relax\footnotetext{2020 \emph{Mathematics Subject Classification}: Primary 28D05; Secondary 37A46.}

\let\thefootnote\relax\footnotetext{Calderon-Zygmund decomposition, Maximal Ergodic Operator, Transference Method, Ergodic Rectangles, Ergodic Weights. }

\begin{abstract}
In this paper, we study the maximal ergodic operator on $L^p_w(X, \mathcal{B}, \mu)$ spaces, $1 \leq p < \infty$, where $(X, \mathcal{B}, \mu)$ is a probability space equipped with an invertible measure preserving transformation $U$ and $w$ is an ergodic $A_p$ weight using transference method.
\end{abstract}

\section{Introduction}
In this paper, using transference method, we prove strong type, weak type inequalities for maximal ergodic operators on $L^p_w(X, \mathcal{B}, \mu)$ spaces, $1 \leq p < \infty$, where $(X, \mathcal{B}, \mu)$ is a probability space equipped with an invertible measure preserving transformation $U$ and $w$ is an ergodic $A_p$ weight.

For standard results on boundedness of various maximal operators in harmonic analysis
we refer to $\text{\cite{Duan1}}$.
Boundedness of  maximal ergodic operator for $L^p$ spaces with weights can be found in $\text{\cite{Att1}}$. In $\text{\cite{Att1}}$ the characterization of those positive functions $w$ such that the maximal ergodic operator associated with an invertible measure preserving transformation on a probability space is a bounded operator on $L^p(wd\mu)$ is given. In their proof the ergodic analogue of Calderon-Zygmund decomposition and the concept of ergodic rectangles are used. In this paper we prove the same result using Calderon-Coifman-Weiss transference principle $\text{\cite{Cald1}}, \text{\cite{Coif2}}, \text{\cite{Mich2}}$.

%
%
 
 \section{Notation}\label{Sec 1}
Throughout this paper, $\ZZ$ denotes set of all integers and $\ZZ_{+}$ denotes set of all positive integers. For a given interval $I$ in $\ZZ$ (We always mean finite interval of integers) , $\abs{I}$ always denotes the cardinality of $I$. 
For each positive integer N, consider collection of disjoint intervals of cardinality $2^N$, 
\[ \left \{ I_{N,j} \right \}_{j \in \ZZ} = \left \{ [ (j-1)2^N+1, \dots, j2^N ] \right \}_{ j \in \ZZ}  \]
The set of intervals which are of the form $I_{N, j}$ where $ N \in \ZZ_{+}$ and $j \in \ZZ$ are 
called dyadic intervals. For fixed $N$, $I_{N,j}$ are disjoint.
Given a dyadic interval $I = \left \{ [ (j-1)2^N+1, \dots, j2^N ] \right \}_{ j \in \ZZ} $ and a positive integer $m$, we define
 \begin{align*}
2mLI & = [ (j-(2m-1))2^N+1, \dots, j2^N ] \\
2mRI & = [ (j-1)2^N+1, \dots, (j+(2m-1))2^N ] \\
(2m+1)I& = 2mLI \cup 2mRI.
 \end{align*}
Note that $2mLI, 2mRI$ are dyadic intervals each of length $(2m)2^N$.
However $(2m+1)I, m \in \ZZ_{+}$  are not  dyadic intervals. 

\section{Definitions}\label{sec 2}
\subsection*{Maximal Operators}
Let $\seq{a}$ be a sequence. We define the following three types of Hardy-Littlewood maximal operators as follows:
\begin{definition}
 If $I_r$ is the interval $\left \{-r,-r+1, \dots, 0, 1, 2, \dots,r-1,r \right\} $, define centered Hardy-Littlewood maximal operator
\[ M^\prime a(m) = \sup_{r >0} \frac{1}{(2r+1)}  \sum_{n \in I_r} \abs{a(m-n)}    \]
For any positive integer $J$, define truncated centered Hardy-Littlewood maximal operator as
\[ (M^\prime_J a) (m) = \sup_{J> r >0} \frac{1}{(2r+1)}  \sum_{n \in I_r} \abs{a(m-n)}    \]
\end{definition}
\begin{definition}
We define Hardy-Littlewood maximal operator as follows
\[ M a(m) = \sup_{m \in I} \frac{1}{\abs{I}} \sum_{ n \in I} \abs{a(n)}   \]
 where the supremum is taken over all intervals containing $m$.
 For any positive integer $J$, define truncated Hardy-Littlewood maximal operator as
 \[ M_J a(m) = \sup_{J \geq m \in I} \frac{1}{\abs{I}} \sum_{ n \in I} \abs{a(n)}   \]
\end{definition}

\begin{definition}
We define dyadic Hardy-Littlewood maximal operator as follows:
\[ M_d a(m) = \sup_{ m \in I} \frac{1}{\abs{I}} \sum_{ k \in I} \abs{a(k)} \]
where supremum is taken over all dyadic intervals containing $m$.
\end{definition}

\begin{definition}
 Given a sequence $\seq{a}$ and an interval $I$, let $a_I$ denote average of ${\seq{a}}$ on $I$.
Let, $ a_I = \frac{1}{\abs{I}} \sum_{ m \in I} a(m) $. Define the sharp maximal operator $M^{\#}$ as follows  
\[ M^{\#} a(m) = \sup_{ m \in I} \frac{1}{\abs{I}} \sum_{ n \in I} \abs{ a(n) - a_I} \]
where the supremum is taken over all intervals $I$ containing $m$. We say that sequence $\seq{a}$ has bounded mean oscillation if the
sequence $M^{\#}a$ is bounded. The space of sequences with this property is denoted by BMO($\ZZ$).
We define a norm in BMO($\ZZ$) by $ \normb{a}_\star = \normb{M^{\#}a}_\infty$. The space BMO($\ZZ$) is studied in $\text{\cite{Mich2}}$.
\end{definition}

\subsection*{Weights}
\begin{definition}
For a fixed $p$, $1 < p < \infty$, we say that a non-negative sequence $\seq{w}$ belongs to class $A_p$ if there is a constant $C$ such that, for all intervals $I$ in $\ZZ$, we have
\[ \biggl( \frac{1}{\abs{I}} \sum_{k \in I} w(k) \biggr) \biggl( \frac{1}{\abs{I}} \sum_{k \in I} w(k)^{-\frac{1}{p-1}} \biggr)^{p-1} \leq C \]
This constant $C$ is called $A_p$ constant.
We say that $ \left \{ w(m) : m \in \ZZ \right \}$  belongs to class $A_1$  if there a constant C such that, for all intervals $I$ in $\ZZ$,
\[ \frac{1}{\abs{I}} \sum_{k \in I} w(k) \leq C w(m) \] for all $m \in I$. 
This constant $C$ is called $A_1$ constant.\\
Let $ 1 \leq p < \infty$ and $\seq{w} \in A_p(\ZZ)$. We say that a sequence $\seq{a}$ is in $\ell^p_w(\ZZ)$ if
\[ \sum_{n \in \ZZ} \abs{a(n)}^p w(n) < \infty\]
We define norm in $\ell^p_w(\ZZ)$ by
 \[ \normb{a}_{\ellw^p(\ZZ)} = \biggl( \sum_{k \in \ZZ} {\abs{ a(k)}}^{p} w(k) \biggr)^{\frac{1}{p}} \] 
For a given sequence $\seq{a} \in \ellw^p(\ZZ)$, the weighted weak(p,p) inequality for a non-negative weight sequence $\seq{w}$ is as follows:
\begin{align*} 
 w(\left \{ m \in \ZZ: Ma(m) > \lambda \right \}) \leq \frac{C}{\lambda^p} \sum_{m \in \ZZ} \abs{a(m)}^p w(m)  \tag{A4} \label{A4}
\end{align*}
For a subset $A$ of $\ZZ$, $w(A)$ denotes $\sum_{k \in A} w(k) $. 
\end{definition}
The following definition is from {\text{\cite{Att1}}\begin{definition}
Let $(X,\textbf{B}, \mu)$ be a probability space and $U$ an invertible measure preserving transformation on X.
Suppose $1 < p < \infty$ and $w:X \to \RR$ be a non-negative integrable function. The function $w$ is said to satisfy ergodic $A_p$ condition if \\
\[ esssup_{x \in X}    \sup_{N \geq 1} \biggl( \frac{1}{2N+1} \sum_{k=-N}^N w(U^k x) \biggr) \biggl( \frac{1}{2N+1} \sum_{k=-N}^N w(U^kx)^{\frac{-1}{p-1}} \biggr)^{p-1}  \leq C  \]
The function $w$ is said to satisfy ergodic $A_1$ condition if
\[ esssup_{x \in X} \sup_{N \geq 1} \frac{1}{2N+1} \sum_{k=-N}^N w(U^k x) \leq C w(U^m x) \]
for $m = -N, -N+1, \dots, N$
\end{definition}

\begin{definition}
Let $ 1 \leq p < \infty$. We say that a measurable function $f\in L^p_w(X)$  if
\[ \int_{x \in X} \abs{f(x)}^p w(x) d\mu(x) < \infty\]
We define norm in $L^p_w(X)$ by
 \[ \normb{f}_{L^p_w(X)} = \biggl( \int_{x \in X} {\abs{ f(x)}}^{p} w(x) d\mu(x) \biggr)^{\frac{1}{p}} \] 
\end{definition}

\section{ Relations between Maximal operators }\label{sec 3}
In the following lemmas, we give relations between maximal operators. For the proofs of the following lemmas, refer $\cite{anup1}$. These relations will be used when we prove the weighted inequalities for maximal ergodic operators.
\begin{lemma}{\label{Sak_lem_2.11}}
Given a sequence $\left \{ a(m): m \in \ZZ \right \}$, the following relation holds:
\[   M^{\prime}a(m)  \leq M a(m) \leq 3 M^\prime a(m) \]
\end{lemma}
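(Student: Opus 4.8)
The plan is to establish the two inequalities separately; both are elementary, and each follows by comparing the families of averaging intervals that define the two operators.

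For the left inequality $M^{\prime}a(m)\le Ma(m)$, I would note that every symmetric window appearing in the definition of $M^{\prime}$ is a particular interval containing $m$. Writing $J_r=\{m-r,\dots,m+r\}$, we have the identity
\[ \frac{1}{2r+1}\sum_{n\in I_r}\abs{a(m-n)}=\frac{1}{\abs{J_r}}\sum_{k\in J_r}\abs{a(k)}, \]
and the right-hand side is one of the quantities over which the supremum defining $Ma(m)$ is taken. Taking the supremum over $r$ gives $M^{\prime}a(m)\le Ma(m)$.

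For the right inequality, fix an arbitrary interval $I\ni m$ and write $I=\{m-s,\dots,m+t\}$ with integers $s,t\ge 0$, so $\abs{I}=s+t+1$. Put $r=\max(s,t)$ and let $J_r=\{m-r,\dots,m+r\}$ be the smallest centered window about $m$ containing $I$; then $\abs{J_r}=2r+1$ and, since $s+t\ge r$, also $\abs{I}=s+t+1\ge r+1$. If $r\ge 1$, enlarging the summation set gives
\[ \frac{1}{\abs{I}}\sum_{n\in I}\abs{a(n)}\le\frac{2r+1}{\abs{I}}\cdot\frac{1}{2r+1}\sum_{k\in J_r}\abs{a(k)}\le\frac{2r+1}{r+1}\,M^{\prime}a(m)\le 3\,M^{\prime}a(m), \]
using $2r+1\le 3(r+1)$. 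If $r=0$ then $I=\{m\}$ and $\frac{1}{\abs{I}}\sum_{n\in I}\abs{a(n)}=\abs{a(m)}\le 3\cdot\frac{1}{3}\bigl(\abs{a(m-1)}+\abs{a(m)}+\abs{a(m+1)}\bigr)\le 3\,M^{\prime}a(m)$. In either case the bound is independent of $I$, so taking the supremum over all intervals $I$ containing $m$ yields $Ma(m)\le 3M^{\prime}a(m)$.

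I do not anticipate any real obstacle; the only point requiring care is the bookkeeping showing that the enlargement factor $\abs{J_r}/\abs{I}$ stays bounded by $3$. This is exactly where the constant $3$ is produced, and it is forced by the degenerate case of a one-point interval, for which one must compare against the smallest admissible centered average rather than a genuinely symmetric enlargement.
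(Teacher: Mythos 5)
Your proof is correct: the change of variables identifying each centered window with an interval containing $m$ gives the first inequality, and the enlargement estimate $\abs{J_r}/\abs{I}\le (2r+1)/(r+1)\le 3$, together with the separate treatment of the one-point interval (where $M'$ has no $r=0$ average available and the constant $3$ is in fact attained, e.g.\ for $a=\chi_{\{m\}}$), gives the second. The paper itself omits the proof and defers to its reference \cite{anup1}; your argument is the standard interval-comparison proof one expects there, so there is nothing to flag.
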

 

\begin{lemma}{\label{Sak_lem_2.12}}
If $\textbf{a}= \left \{ a(k): k \in \ZZ \right \}$ is a non-negative sequence with $\textbf{a} \in \ell_1$, then 
\[ \abs{ \left \{  m \in  \ZZ: M^\prime a(m) > 4  \lambda \right \} } \leq 3  \abs{ \left \{ m \in \ZZ: M_d a(m) > \lambda \right \} } \]
\end{lemma}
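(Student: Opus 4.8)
The plan is to reduce the centered maximal operator $M^\prime$ to the dyadic maximal operator $M_d$ by first passing to the uncentered operator $M$ via Lemma~\ref{Sak_lem_2.11}, and then comparing an arbitrary interval to a bounded number of dyadic intervals covering it. By Lemma~\ref{Sak_lem_2.11} we have $M^\prime a(m) \le M a(m)$, so it suffices to show $\abs{\{m : M a(m) > 4\lambda\}} \le 3\,\abs{\{m : M_d a(m) > \lambda\}}$. Actually, since Lemma~\ref{Sak_lem_2.11} gives $Ma \le 3M^\prime a$, the cleanest route is to work directly with $M$: fix $m$ with $M^\prime a(m) > 4\lambda$; I want to produce a dyadic interval $J$ with $m$ close to $J$ and $a_J > \lambda$.

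The main step is a Whitney/Calderón–Zygmund-type covering argument. Given an arbitrary interval $I \ni m$ with $\frac{1}{\abs I}\sum_{n\in I}\abs{a(n)} > 4\lambda$, choose $N$ so that $2^{N-1} \le \abs I < 2^N$ (or a comparable normalization). Then $I$ is contained in the union of at most three consecutive dyadic intervals $I_{N,j-1}, I_{N,j}, I_{N,j+1}$ of length $2^N$, each of length at most $2\abs I$. Hence
\[
4\lambda\,\abs I < \sum_{n\in I}\abs{a(n)} \le \sum_{k=-1}^{1}\sum_{n\in I_{N,j+k}}\abs{a(n)},
\]
so at least one of the three dyadic intervals $I_{N,j+k}$ satisfies $\sum_{n\in I_{N,j+k}}\abs{a(n)} > \frac{4\lambda}{3}\abs I \ge \frac{4\lambda}{3}\cdot\frac{\abs{I_{N,j+k}}}{2} = \frac{2\lambda}{3}\abs{I_{N,j+k}}$; tightening the normalization (taking the smallest dyadic length $\ge \abs I$, and noting the average over the containing dyadic interval) one gets a dyadic interval $J = J(I) \supseteq$ (a large part of) $I$ with $a_J > \lambda$, so every $n \in J$ has $M_d a(n) > \lambda$. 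The factor $3$ in the conclusion comes precisely from the fact that $I$ meets at most three dyadic intervals of the relevant scale, so $\{M^\prime a > 4\lambda\} = \{M a > \text{(something)}\lambda\}$ is covered by at most three translates/copies of the set $\{M_d a > \lambda\}$; summing cardinalities over these at most three families yields the constant $3$.

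The hard part will be bookkeeping the constants so that the threshold comes out exactly as $M^\prime a > 4\lambda$ controlled by $M_d a > \lambda$ with multiplicative constant exactly $3$ — in particular choosing the dyadic scale $N$ correctly (smallest $N$ with $2^N \ge \abs I$ gives $\abs{I_{N,j}} < 2\abs I$, which is what forces the $4$), and handling the maximal operator's supremum uniformly over all $m$ in the selected dyadic intervals. The hypothesis $\mathbf a \ge 0$, $\mathbf a \in \ell_1$ is used so that all the finite sums and the level set $\{M_d a > \lambda\}$ are genuinely finite (indeed $\abs{\{M_d a > \lambda\}} \le \frac{1}{\lambda}\norm{\mathbf a}_{\ell_1}$), which makes the covering/counting argument rigorous; I would invoke this at the start to guarantee the right-hand side is finite and the maximal functions are well-defined pointwise. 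No step should require more than elementary interval combinatorics once the scale normalization is pinned down.
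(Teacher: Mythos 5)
Your core geometric idea (for each $m$ with $M^\prime a(m)>4\lambda$, find a dyadic interval $J$ of comparable scale near $m$ with $a_J>\lambda$, hence $J\subseteq\{M_d a>\lambda\}$) is the same fact that drives the paper's argument, which (as used again in the proof of Theorem \ref{Sak_Thm_2.16}) goes through the Calder\'on--Zygmund decomposition: write $\{m:M_d a(m)>\lambda\}$ as the disjoint union of the \emph{maximal} dyadic intervals $I_j$ with $a_{I_j}>\lambda$ and show $\{m:M^\prime a(m)>4\lambda\}\subseteq\cup_j 3I_j$, so that $\abs{\{M^\prime a>4\lambda\}}\le\sum_j\abs{3I_j}=3\sum_j\abs{I_j}=3\abs{\{M_d a>\lambda\}}$. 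The gap in your proposal is the final counting step: the assertion that $\{M^\prime a>4\lambda\}$ is ``covered by at most three translates/copies of the set $\{M_d a>\lambda\}$'' is not valid as stated, because the translation amount needed is the scale $2^N$ of the selected interval $J=J(m)$, which varies with $m$; no fixed triple of translates of the level set works. Nor can you simply sum $\abs{3J}$ over all selected $J$: different points $m$ may select nested or heavily overlapping $J$'s, and without a disjointness selection this overcounts. The missing ingredient is exactly the maximal-interval (Calder\'on--Zygmund) selection: since $\mathbf{a}\in\ell_1$ the averages over long dyadic intervals tend to $0$, so every dyadic $J$ with $a_J>\lambda$ sits inside a maximal such interval; the maximal ones $I_j$ are pairwise disjoint, their union is $\{M_d a>\lambda\}$, and $3J\subseteq 3I_j$, which is what turns your pointwise statement into the factor $3$.

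There is also an arithmetic slip in the scale normalization: covering $I$ by three dyadic intervals of length $2^N$ only produces one of them with average exceeding $\tfrac{2\lambda}{3}$, which is not enough. Take instead $N$ minimal with $2^N\ge\abs{I}$ (so $\abs{I}\ge 2^{N-1}$); then $I$ meets at most \emph{two} consecutive dyadic intervals of length $2^N$, and one of them, say $J$, satisfies $\sum_{n\in J}\abs{a(n)}>2\lambda\abs{I}\ge\lambda\abs{J}$, i.e.\ $a_J>\lambda$. Moreover $I\cap J\neq\emptyset$ and $\abs{I}\le\abs{J}$ give $\operatorname{dist}(m,J)<\abs{J}$, so $m$ lies in $J$ or in one of its two dyadic neighbours of the same length, i.e.\ $m\in 3J\subseteq 3I_j$; feeding this into the disjoint maximal decomposition above completes the proof and brings your argument into line with the paper's.
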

In the following lemma, we see that in the norm of BMO($\ZZ$) space, we can replace the average $a_I$ of $\left \{ a(n) \right \}$ by a constant $b$. The proof is similar to the proof in continuous version $\cite{Duan1}$. The second inequality follows from $\abs { \abs{a} - \abs{b} } \leq \abs{a} - \abs{b} $.
\begin{lemma}{\label{Sak_Prop_6.5}} 
Consider  a non-negative sequence $\textbf{a}= \left \{ a(k): k \in \ZZ \right \}$. Then the following are valid.
\begin{align*}
1.& \quad \frac{1}{2} \normb{a}_{\star} \leq \sup_{m \in I} \inf_{b \in \ZZ} \frac{1}{\abs{I}} \abs{ a(m)- b} \leq \normb{a}_\star \\
2.& \quad M^{\#}(\abs{a})(i) \leq M^{\#} a(i) , i \in \ZZ
\end{align*}

\end{lemma}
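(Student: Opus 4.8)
The plan is to prove the two assertions separately, starting with the sandwich inequality in part 1.

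For the upper bound $\sup_{m\in I}\inf_{b\in\ZZ}\frac{1}{|I|}\sum_{n\in I}|a(n)-b|\le\|a\|_\star$, the idea is simply to take $b=a_I$ as a competitor in the infimum; then $\frac{1}{|I|}\sum_{n\in I}|a(n)-a_I|\le M^\#a(m)$ for every $m\in I$, and taking the supremum over all $I$ containing $m$ and then over $m$ gives exactly $\|M^\#a\|_\infty=\|a\|_\star$. (Here I am reading the displayed middle term as $\frac{1}{|I|}\sum_{n\in I}|a(n)-b|$, i.e. the sharp-type average with the constant $b$ in place of $a_I$, since that is the quantity discussed in the sentence preceding the lemma.)

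For the lower bound $\frac12\|a\|_\star\le\sup_{m\in I}\inf_{b\in\ZZ}\frac{1}{|I|}\sum_{n\in I}|a(n)-b|$, fix an interval $I$ containing $m$ and any constant $b$. First I would estimate
\[
\frac{1}{|I|}\sum_{n\in I}|a(n)-a_I|
\;\le\;\frac{1}{|I|}\sum_{n\in I}|a(n)-b|\;+\;|b-a_I|,
\]
using the triangle inequality inside the sum. Then I would bound $|b-a_I|=\big|\frac{1}{|I|}\sum_{n\in I}(b-a(n))\big|\le\frac{1}{|I|}\sum_{n\in I}|a(n)-b|$ by Jensen/the triangle inequality again. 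Combining, $\frac{1}{|I|}\sum_{n\in I}|a(n)-a_I|\le 2\cdot\frac{1}{|I|}\sum_{n\in I}|a(n)-b|$; since this holds for all $b$, it holds with the infimum over $b$ on the right, and then taking $\sup_{m\in I}$ on both sides yields $M^\#a(m)\le 2\sup_{m\in I}\inf_{b}\frac{1}{|I|}\sum_{n\in I}|a(n)-b|$. Taking $\|\cdot\|_\infty$ over $m$ gives the claim. There is no real obstacle here; one only has to be careful that the infimum over $b\in\ZZ$ (rather than $b\in\RR$) still makes the argument go through, which it does because the competitor $b=a_I$ used for the upper bound and the generic $b$ used for the lower bound are treated symmetrically, and integer $b$ only shrinks the infimum, which is harmless for the direction $\frac12\|a\|_\star\le\cdots$.

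For part 2, the point is the pointwise elementary inequality $\big||a(n)|-|a_I|\big|\le|a(n)-a_I|$ together with the observation that $|a|_I=\frac{1}{|I|}\sum_{n\in I}|a(n)|\ge|a_I|$ when combined with $\big||x|-|y|\big|\le|x-y|$; more directly, I would write $\frac{1}{|I|}\sum_{n\in I}\big||a(n)|-|a|_I\big|\le\frac{1}{|I|}\sum_{n\in I}\big||a(n)|-|a_I|\big|+\big||a_I|-|a|_I\big|$, bound the first term by $\frac{1}{|I|}\sum_{n\in I}|a(n)-a_I|$ and the second by the same quantity via $|a|_I-|a_I|=\frac{1}{|I|}\sum_{n\in I}(|a(n)|-|a_I|)$ — wait, this would cost another factor. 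Cleaner: since $|a|_I$ is the best constant approximation to $|a|$ in the $\ell^1$-average sense only up to the factor $2$, instead I use that $\frac{1}{|I|}\sum_{n\in I}\big||a(n)|-|a|_I\big|\le\frac{1}{|I|}\sum_{n\in I}\big||a(n)|-c\big|$ for the specific choice $c=|a_I|$ fails by a factor; so the sharp route is: $\frac{1}{|I|}\sum_{n\in I}\big||a(n)|-|a|_I\big|\le\frac{1}{|I|}\sum_{n\in I}\big||a(n)|-|a_I|\big|\le\frac{1}{|I|}\sum_{n\in I}|a(n)-a_I|\le M^\#a(i)$, where the first inequality uses that $|a|_I$ minimizes $c\mapsto\frac{1}{|I|}\sum_{n\in I}\big||a(n)|-c\big|$?—it does not minimize in general, but it does satisfy $\frac{1}{|I|}\sum_{n\in I}\big||a(n)|-|a|_I\big|\le\frac{1}{|I|}\sum_{n\in I}\big||a(n)|-c\big|$ is false; the correct and simplest statement is just the triangle inequality $\big||a(n)|-|a|_I\big|=\big||a(n)|-\tfrac{1}{|I|}\sum_{k\in I}|a(k)|\big|\le\tfrac{1}{|I|}\sum_{k\in I}\big||a(n)|-|a(k)|\big|\le\tfrac{1}{|I|}\sum_{k\in I}|a(n)-a(k)|$, average over $n\in I$, and use $\frac{1}{|I|^2}\sum_{n,k\in I}|a(n)-a(k)|\le\frac{2}{|I|}\sum_{n\in I}|a(n)-a_I|$. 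This last symmetrization inequality is the only slightly delicate step, and it is the expected main obstacle; I would prove it by inserting $a_I$: $|a(n)-a(k)|\le|a(n)-a_I|+|a(k)-a_I|$ and summing. Taking the supremum over intervals $I$ containing $i$ then gives $M^\#(|a|)(i)\le M^\#a(i)$ as stated (the paper already notes this follows from $\big||a|-|b|\big|\le|a-b|$, so a one-line version suffices).
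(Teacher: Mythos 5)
Your part 1 is fine: taking $b=a_I$ for the right-hand inequality and the triangle-inequality step $|b-a_I|\le\frac{1}{|I|}\sum_{n\in I}|a(n)-b|$ for the factor $\frac12$ is exactly the standard continuous-variable argument from the reference the paper invokes (the paper gives no proof beyond that citation). One correction, though: your parenthetical about the restriction $b\in\ZZ$ is backwards. Restricting the competitors to integers makes the infimum \emph{larger}, not smaller; that is harmless for the left-hand inequality but it is precisely the right-hand inequality that breaks, since $a_I$ need not be an integer (for $a\equiv\frac12$ the middle quantity, read with integer $b$ and the sum over $n\in I$, equals $\frac12$ while $\normb{a}_\star=0$). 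The statement is evidently intended with real constants $b$, as in the continuous version, and with that reading your argument is correct; but your justification for why the integer restriction is ``harmless'' is not.

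Part 2 contains a genuine gap. Your final chain --- $\bigl||a(n)|-|a|_I\bigr|\le\frac{1}{|I|}\sum_{k\in I}\bigl||a(n)|-|a(k)|\bigr|\le\frac{1}{|I|}\sum_{k\in I}|a(n)-a(k)|$, averaged in $n$, followed by $\frac{1}{|I|^2}\sum_{n,k\in I}|a(n)-a(k)|\le\frac{2}{|I|}\sum_{n\in I}|a(n)-a_I|$ --- proves $M^{\#}(|a|)(i)\le 2\,M^{\#}a(i)$, and your concluding sentence silently drops the factor $2$. That factor cannot be removed for general real sequences: take $a(0)=-1$, $a(4)=1$, $a(n)=0$ otherwise; then $M^{\#}a(2)=\frac49$, while the interval $\{0,\dots,4\}$ already gives $M^{\#}(|a|)(2)\ge\frac{12}{25}>\frac49$. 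The stated constant-$1$ inequality is true only because of the lemma's standing hypothesis that $a$ is non-negative, in which case $|a|=a$ and part 2 is an identity --- a hypothesis your proof never uses; read as an argument for signed sequences (the only case with content, and the case the paper's hint $\bigl||a|-|b|\bigr|\le|a-b|$ is aimed at), it yields only the constant-$2$ version. The string of retracted claims about $|a|_I$ minimizing the $\ell^1$ average should also be excised from any final write-up; the symmetrization step is the only argument you actually need, and even it does not deliver the constant asserted in the lemma.
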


\section{Weighted Classical Results for Maximal Operators}

In this section, for a given sequence $\seq{a}$ in $\ell^p_w(\ZZ)$, we prove weighted weak(p,p) inequality with respect to the weight $\seq{w} \in A_p$ which is as follows:
\begin{align*} 
 w(\left \{ m \in \ZZ: Ma(m) > \lambda \right \}) \leq \frac{C}{\lambda^p} \sum_{m \in \ZZ} \abs{a(m)}^p w(m)  \tag{A4} \label{A4}
\end{align*}

Inequality $\ref{A4}$ will be proved via several theorems, Theorem [$\ref{Sak_Thm_2.16}$] to Theorem [$\ref{Sak_Thm_7.1}$]. 

The proof of Theorem$\ref{Sak_Thm_2.16}$ uses Calderon-Zygmund decomposition.
For the proofs of the corresponding results for the continuous version, we refer $\cite{Duan1}$. The proofs of Theorem [$\ref{Sak_Thm_2.16}$] and Theorem [$\ref{Sak_Thm_7.1}$] are same as the proof for the continuous versions of the corresponding results apart from the fact that the constants obtained here are slightly different from the constants obtained for the continuous version due to the nature of dyadic intervals in $\ZZ$ [See $\cite{anup2}$]. So, we give here the proof of Theorem[$\ref{Sak_Thm_2.16}$] and Theorem $\ref{Sak_Thm_7.1}$ for the sake of completeness.

\begin{theorem}{\label{Sak_Thm_2.16}}
If $\seq{w} \in A_p$ , $ 1 \leq  p < \infty$ and $\seq{a}$ is a sequence in $\ell^p(\ZZ)$, then for $p>1$, there exists a constant $C_p$ such that
\[ \sum_{m \in \ZZ} Ma(m)^pw(m)  \leq C_p \sum_{m \in \ZZ} \abs{a(m)}^p Mw(m)  \tag{\ref{Sak_Thm_2.16}[A]} \label{5.1A} \]
Furthermore, for $p=1$, there exists a constant $C_1$ such that
\[ \sum_{\left \{ m \in \ZZ:Ma(m) > \lambda \right \}} w(m)   \leq \frac{C_1}{\lambda} \sum_{m \in \ZZ} \abs{a(m)} Mw(m) \tag{\ref{Sak_Thm_2.16}[B]} \label{5.1B} \]
\end{theorem}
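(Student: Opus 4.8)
The plan is to prove the weak-type bound \eqref{5.1B} first and deduce the strong-type bound \eqref{5.1A} from it by the usual truncation argument; in keeping with the remark preceding the theorem, the engine is a Calderón–Zygmund decomposition at height $\lambda$. Since $M$ sees $a$ only through $\abs a$, I assume $a\ge 0$, and I note that $w\ge 0$ is all that is used here — the $A_p$ hypothesis plays no role in this particular estimate. For \eqref{5.1B}, fix $\lambda>0$. Because $a\in\ell^p(\ZZ)$, every interval on which the average of $a$ exceeds $\lambda$ has length bounded by a quantity depending only on $\norm{a}_{\ell^p}$ and $\lambda$, so the level set $\{m:Ma(m)>\lambda\}$ is finite. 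Running the Calderón–Zygmund stopping procedure (equivalently, a one-dimensional covering lemma) on the intervals that witness $Ma(m)>\lambda$, I would extract a subfamily $\{I_j\}$ that covers $\{m:Ma(m)>\lambda\}$, satisfies $\lambda<\abs{I_j}^{-1}\sum_{k\in I_j}a(k)$ for each $j$, and has uniformly bounded overlap $\sum_j\chi_{I_j}\le 2$.

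With such a family in hand the rest of \eqref{5.1B} is mechanical: for $k\in I_j$ the interval $I_j$ contains $k$, so $\abs{I_j}^{-1}\sum_{i\in I_j}w(i)\le Mw(k)$, whence $w(I_j)=\abs{I_j}\cdot\abs{I_j}^{-1}w(I_j)<\lambda^{-1}\sum_{k\in I_j}a(k)\,\abs{I_j}^{-1}w(I_j)\le\lambda^{-1}\sum_{k\in I_j}a(k)\,Mw(k)$; summing over $j$ and using the overlap bound gives $w(\{m:Ma(m)>\lambda\})\le 2\lambda^{-1}\sum_{k\in\ZZ}a(k)\,Mw(k)$, which is \eqref{5.1B}. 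This already settles $p=1$.

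For $p>1$ I would obtain \eqref{5.1A} from the layer-cake identity $\sum_m Ma(m)^p w(m)=p\int_0^\infty\lambda^{p-1}w(\{Ma>\lambda\})\,d\lambda$ combined with the splitting $a=a\chi_{\{a>\lambda/2\}}+a\chi_{\{a\le\lambda/2\}}$. The second summand has maximal function at most $\lambda/2$ everywhere, so $\{Ma>\lambda\}\subseteq\{M(a\chi_{\{a>\lambda/2\}})>\lambda/2\}$; applying the weak-type bound just proved to $a\chi_{\{a>\lambda/2\}}$ at height $\lambda/2$ gives $w(\{Ma>\lambda\})\le 4\lambda^{-1}\sum_{k:\,a(k)>\lambda/2}a(k)\,Mw(k)$. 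Inserting this and interchanging the $k$-sum with the $\lambda$-integral reduces the matter to evaluating $\int_0^{2a(k)}\lambda^{p-2}\,d\lambda=\frac{(2a(k))^{p-1}}{p-1}$ — finiteness of this integral is exactly where $p>1$ is needed — and this yields \eqref{5.1A} with $C_p$ of the form $2^{p+1}p/(p-1)$.

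The main obstacle is the Calderón–Zygmund/covering step: arranging the selection over \emph{general} intervals of $\ZZ$ (not merely dyadic ones) so that the retained family both covers $\{Ma>\lambda\}$ and has uniformly bounded overlap. This is the only place where the combinatorics of intervals of $\ZZ$ genuinely intervene, and — as the text anticipates — it is where the numerical constants depart from those of the continuous analogue in \cite{Duan1}; the remaining steps are routine real-variable manipulations.
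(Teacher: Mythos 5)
Your proposal is correct, but it follows a somewhat different route than the paper. For the weak-type estimate the paper forms a Calder\'on--Zygmund decomposition at height $\lambda/4$ into \emph{dyadic} intervals $I_j$, uses the inclusion $\{m: M^{\prime}a(m)>\lambda\}\subset\cup_j 3I_j$ together with the comparison lemmas between $M$, $M^{\prime}$ and $M_d$ (Lemmas \ref{Sak_lem_2.11} and \ref{Sak_lem_2.12}), and the average of $w$ over $3I_j$ is dominated by $Mw(n)$ for $n\in I_j$; you instead work with arbitrary witness intervals for $\{Ma>\lambda\}$ (a finite set, as you correctly argue from $a\in\ell^p$) and invoke the classical one-dimensional selection lemma producing a cover with overlap at most $2$ --- this lemma does hold verbatim for intervals of $\ZZ$ (discard intervals contained in others, then sort by endpoints), so the step you flag as the main obstacle is not a genuine gap. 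For $p>1$ the paper proves the endpoint bound $\normb{Ma}_{\ell^\infty_w(\ZZ)}\leq\normb{a}_{\ell^\infty_{Mw}(\ZZ)}$ and cites the Marcinkiewicz interpolation theorem, whereas you carry out the truncation-plus-layer-cake computation explicitly (which is in effect the proof of that interpolation specialized to $M$). What each buys: your version is self-contained, needs no dyadic machinery, and yields clean explicit constants ($2$ for the weak bound, $2^{p+1}p/(p-1)$ for the strong bound, versus $36C$ and an unspecified interpolation constant in the paper); the paper's version reuses the dyadic decomposition and the maximal-operator comparison lemmas from \cite{anup1} that are needed elsewhere in the transference argument. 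You are also right that the $A_p$ hypothesis is never used in this theorem --- it is a Fefferman--Stein type inequality for the pair $(w,Mw)$, and the paper's own proof likewise uses only nonnegativity of $w$.
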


\begin{proof}
We will show that $\normb{Ma}_{\ell^\infty_w(\ZZ)} \leq \normb{a}_{\ell^\infty_{Mw}(\ZZ)}$ and that weak(1,1) inequality holds; the strong(p,p) inequality then follows from the
Marcinkiewicz interpolation theorem. \\
Take $C > \normb{a}_{\ell^\infty_{Mw}(\ZZ)}$. Then 
\[ \sum_{\left \{ n \in \ZZ: \abs{a(n)} > C \right \} } Mw(n) =0 \] which shows that
\[ \left \{ n \in \ZZ: \abs{a(n)} > C  \right\} = \emptyset \] Hence $\abs{a(n)} \leq C, \forall n \in \ZZ$ which implies that $\abs{Ma(n)} \leq C, \forall n \in \ZZ $  Therefore
\[ \left \{ n \in \ZZ: \abs{Ma(n)} > C \right \} = \emptyset \]
So, \[ \sum_{\left \{ n \in \ZZ: \abs{Ma(n) } > C \right \} } w(n) =  0 \]
which gives $ w( \left \{ n \in \ZZ: \abs{Ma(n)} > C \right \} )=0 $. \\
Therefore $ \normb{Ma}_{\ell^\infty_w(\ZZ)} \leq C$.
Taking $\inf \left \{ C: \normb{a}_{\ell^\infty_{Mw}(\ZZ)} < C \right \}$
we get 
\[ \normb{Ma}_{\ell^\infty_w(\ZZ)} \leq \normb{a}_{\ell^\infty_{Mw}(\ZZ)} \]

To prove the weak(1,1) inequality we may assume that $\seq{a} \in \ell^1(\ZZ)$. Form Calderon-Zygmund decomposition of sequence $\seq{a}$ at height $\frac{\lambda}{4} > 0 $. Then we get a sequence $\left \{ I_j \right \} $  of dyadic intervals in $\ZZ$ such that 
\[ \frac{1}{\abs{I_j}} \sum_{k \in I_j} \abs{a(k)} > \frac{\lambda}{4} \]
Further as we showed in the proof of Lemma [$\ref{Sak_lem_2.12}]( See$ \cite{anup1}), 
\[ \left \{ m \in \ZZ: M^\prime a(m) >  \lambda \right \} \subset \cup_j 3I_j \]
It follows that
\begin{align*}
\sum_{ \left \{ m: M^\prime a(m) >  \lambda \right \} } w(m)   & \leq \sum_j \sum_{m \in 3I_j} w(m)   \\
&= \sum_j 3 \abs{I_j} \frac{1}{\abs{3I_j}} \sum_{m \in 3I_j} w(m)   \\
&\leq \frac{12}{\lambda} \sum_j \biggl(  \biggl( \sum_{n \in I_j} \abs{a(n)} \biggr) \biggl( \frac{1}{\abs{3I_j}} \sum_{m \in 3I_j} w(m)   \biggr) \biggr)   \\
&\leq \frac{12 C}{\lambda} \sum_{n \in \ZZ} \abs{a(n)} Mw(n)
\end{align*}
Since by Lemma[$\ref{Sak_lem_2.11}$], $\left \{ m: Ma(m) > \lambda \right \} \subseteq \left \{ m: M^\prime a(m) > \frac{\lambda}{3} \right \}$, it follows that
\[ \sum_{\left \{m: Ma(m)>\lambda \right \}} w(m) \leq \sum_{ \left \{ m: M^\prime a(m) > \frac{\lambda}{3} \right \}} w(m) \leq \frac{ 36C}{\lambda} \sum_{n \in \ZZ} \abs{a(n)} Mw(n) \]
\end{proof}

The proof of the following theorem is similar to the proof of corresponding result in continuous version $\cite{Duan1}$. We state here without proof.
\begin{theorem}{\label{Sak_own_7.1}}
Let ${\seq{a}}$ be a non-negative sequence and $\seq{w} \in A_p, 1\leq p < \infty$ be a non-negative weight sequence. Let $I$ be an interval  such that $a(m) > 0$ for some $m \in I$. Then, 
\begin{enumerate}
\item \[ w(I) \biggl( \frac{a(I)}{\abs{I}} \biggr)^p \leq C \sum_{ m \in I} \abs{a(m)}^p w(m)  \tag{\ref{Sak_own_7.1}[A] }    \]
\item Given a finite set $S \subset I$ ,\[ w(I) \biggl( \frac{\abs{S}}{\abs{I}} \biggr)^p \leq Cw(S) \tag{\ref{Sak_own_7.1}[B] }   \]
\end{enumerate}
\end{theorem}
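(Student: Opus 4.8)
The plan is to obtain part~(1) directly from Hölder's inequality together with the $A_p$ condition, treating $p=1$ and $p>1$ separately, and then to deduce part~(2) by applying part~(1) to the indicator sequence $a=\chi_S$.

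For $p=1$ I would use the $A_1$ condition in the form $\frac{1}{\abs{I}}\sum_{k\in I}w(k)\le C\,w(m)$ for every $m\in I$, that is $w(I)/\abs{I}\le C\,w(m)$. Multiplying by $a(m)\ge 0$ and summing over $m\in I$ gives $\frac{w(I)}{\abs{I}}a(I)\le C\sum_{m\in I}a(m)w(m)$, which is exactly the inequality in part~(1) for $p=1$.

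For $p>1$ I would write $a(I)=\sum_{m\in I}a(m)w(m)^{1/p}w(m)^{-1/p}$ and apply Hölder with exponents $p$ and $p'=p/(p-1)$ to obtain
\[
a(I)\le\Big(\sum_{m\in I}a(m)^{p}w(m)\Big)^{1/p}\Big(\sum_{m\in I}w(m)^{-1/(p-1)}\Big)^{(p-1)/p}.
\]
Rearranging the $A_p$ inequality yields $\sum_{m\in I}w(m)^{-1/(p-1)}\le\abs{I}\big(C\abs{I}/w(I)\big)^{1/(p-1)}$, so the second factor above is at most $C^{1/p}\abs{I}/w(I)^{1/p}$. Substituting this bound and raising the resulting inequality to the $p$-th power gives $a(I)^{p}\le C\,\frac{\abs{I}^{p}}{w(I)}\sum_{m\in I}a(m)^{p}w(m)$, which rearranges to the claim of part~(1).

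For part~(2) I would apply part~(1) with $a=\chi_S$; this is admissible since a nonempty $S$ makes $a(m)>0$ for some $m\in I$, and if $S=\emptyset$ the inequality is trivial. Then $a(I)=\abs{S\cap I}=\abs{S}$ and $\sum_{m\in I}a(m)^{p}w(m)=w(S)$, so part~(1) reduces to $w(I)(\abs{S}/\abs{I})^{p}\le C\,w(S)$. I do not expect any genuine obstacle here; the only point needing attention is the bookkeeping of the constant $C$ through the Hölder step and the $A_p$ rearrangement, which — as the paper already remarks for its other theorems — comes out slightly different from the constant in the continuous case.
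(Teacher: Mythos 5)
Your argument is correct and is exactly the route the paper indicates: it proves \ref{Sak_own_7.1}[A] by Hölder's inequality combined with the $A_p$ (resp.\ $A_1$) condition and deduces \ref{Sak_own_7.1}[B] by taking $a=\chi_S$, which is precisely the paper's (unexpanded) two-line justification. Your write-up simply fills in the details, including the $p=1$ case and the rearrangement of the $A_p$ inequality, with the constants coming out as expected.
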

$\ref{Sak_own_7.1}[A]$ follows from Holder's inequality and the $A_p$ condition.
$\ref{Sak_own_7.1}[B]$ follows by taking $ a =\chi_S$ in $\ref{Sak_own_7.1}[A]$.

\begin{theorem}{\label{Sak_Thm_7.1}}
Assume $\seq{w} \in A_p$. Given a non-negative sequence $\left \{ a(n): n \in \ZZ \right \} \in \ellw^p(\ZZ)$, for $1 \leq p< \infty$, the weighted weak(p,p) inequality holds:
\[ w(\left \{ m \in \ZZ: Ma(m) > \lambda \right \}) \leq \frac{C}{\lambda^p} \sum_{m \in \ZZ} \abs{a(m)}^p w(m)  \]
\end{theorem}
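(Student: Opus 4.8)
The plan is to handle all $1\le p<\infty$ at once by feeding the Calder\'on--Zygmund decomposition already built in the proof of Theorem~\ref{Sak_Thm_2.16} into the two estimates of Theorem~\ref{Sak_own_7.1}. First I would reduce to the case of a finitely supported non-negative sequence: replacing $a$ by its truncations $a_L:=a\,\chi_{[-L,L]}$ only increases the left-hand side and only decreases the right-hand side, and since $Ma_L\uparrow Ma$ pointwise (a near-optimal finite averaging interval for $a$ at a point is also near-optimal for $a_L$ once $L$ is large) we have $\{m:Ma_L(m)>\lambda\}\uparrow\{m:Ma(m)>\lambda\}$, hence $w(\{Ma>\lambda\})=\sup_L w(\{Ma_L>\lambda\})$. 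In particular we may then assume $a\in\ell^1(\ZZ)$, so that the Calder\'on--Zygmund machinery applies.

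Fix $\lambda>0$. As in the proof of Theorem~\ref{Sak_Thm_2.16}, form the Calder\'on--Zygmund decomposition of $a$ at height $\lambda/4$, obtaining pairwise disjoint dyadic intervals $\{I_j\}$ with $\frac{1}{|I_j|}\sum_{k\in I_j}a(k)>\lambda/4$ and, as shown there, $\{m\in\ZZ:M'a(m)>\lambda\}\subseteq\bigcup_j 3I_j$. The key is to control $w(3I_j)$ by the weighted mass of $a$ over $I_j$ alone. Applying Theorem~\ref{Sak_own_7.1}[B] with the interval $3I_j$ and the finite subset $I_j\subseteq 3I_j$, and using $|3I_j|=3|I_j|$, gives the doubling bound $w(3I_j)\leq C\,3^p\,w(I_j)$. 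Next, since $a(m)>0$ for some $m\in I_j$, Theorem~\ref{Sak_own_7.1}[A] applied to $I_j$ gives $w(I_j)\bigl(a(I_j)/|I_j|\bigr)^p\leq C\sum_{m\in I_j}a(m)^p w(m)$, and $a(I_j)/|I_j|>\lambda/4$ then yields $w(I_j)\leq C(4/\lambda)^p\sum_{m\in I_j}a(m)^p w(m)$.

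Combining these and summing over $j$ --- using crucially that the $I_j$ are pairwise disjoint, so that $\sum_j\sum_{m\in I_j}(\cdot)\leq\sum_{m\in\ZZ}(\cdot)$ --- gives
\[
w\bigl(\{m:M'a(m)>\lambda\}\bigr)\ \leq\ \sum_j w(3I_j)\ \leq\ \frac{C'}{\lambda^p}\sum_{m\in\ZZ}a(m)^p w(m),
\]
with $C'$ depending only on $p$ and the $A_p$ constant of $w$. By Lemma~\ref{Sak_lem_2.11}, $\{m:Ma(m)>\lambda\}\subseteq\{m:M'a(m)>\lambda/3\}$, so replacing $\lambda$ by $\lambda/3$ in the last display, and then letting $L\to\infty$ to undo the truncation, gives the asserted weak $(p,p)$ inequality (with constant $3^pC'$). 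For $p=1$ this chain works verbatim; alternatively, the $p=1$ case is immediate from the weak $(1,1)$ estimate in Theorem~\ref{Sak_Thm_2.16} combined with the $A_1$ condition $Mw(m)\leq Cw(m)$.

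The one step I expect to require care is passing from the dilated intervals $3I_j$ back to the disjoint intervals $I_j$: a hands-on argument would need to bound the overlap of the family $\{3I_j\}$, but here that is absorbed cleanly into the doubling estimate of Theorem~\ref{Sak_own_7.1}[B], which holds precisely because $w\in A_p$. The remaining points are purely bookkeeping: the dyadic intervals of $\ZZ$ produce numerical constants slightly different from the Euclidean ones (as already noted before Theorem~\ref{Sak_Thm_2.16}), affecting only the value of $C'$ and not the structure of the argument.
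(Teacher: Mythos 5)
Your proposal is correct and follows essentially the same route as the paper's proof: a Calder\'on--Zygmund decomposition giving $\{M'a>\lambda\}\subseteq\bigcup_j 3I_j$, the doubling bound $w(3I_j)\le C3^p w(I_j)$ and the estimate $w(I_j)\le C(\lambda^{-1}|I_j|/a(I_j))^p$-type bound from Theorem~\ref{Sak_own_7.1}[B] and [A], summation over the disjoint $I_j$, and Lemma~\ref{Sak_lem_2.11} to pass from $M'$ to $M$. The only differences are cosmetic: you add a truncation step to reduce to $\ell^1$ and decompose at height $\lambda/4$ before rescaling $\lambda\to\lambda/3$, whereas the paper decomposes directly at height $\lambda/12$.
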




\begin{proof}
 

Let $ \seq{a} \in \ellw^p(\ZZ)$.
Form the Calderon-Zygmund decomposition of $\seq{a}$ at height $\frac{\lambda}{12}$ to get a collection of disjoint intervals $\left \{ I_j \right \}$ such that $ a(I_j) > \frac{\lambda}{12} \abs{I_j}$. By the proof of Lemma$[{\ref{Sak_lem_2.12}}]$ in $\cite{anup1}$ and Lemma[$\ref{Sak_lem_2.11}]$ , we have
\[ \left \{ m \in \ZZ: Ma(m) > \lambda \right \} \subseteq \left \{ m \in \ZZ: M^\prime a(m) > \frac{\lambda}{3} \right \} \subseteq \cup_j 3I_j \]
Therefore, using Theorem[$\ref{Sak_own_7.1}$], we have 
\begin{align*}
w(\left \{ m \in \ZZ: Ma(m) > \lambda \right \} ) & \leq \sum_j w(3I_j) \leq C 3^p \sum_j w(I_j) \\
&  \leq C 3^p  \sum_j \biggl( \frac{\abs{I_j}} {a(I_j)} \biggr)^p \sum_{m \in I_j} \abs{a(m)}^p w(m) \\
& \leq C 3^p  \biggl( \frac{12}{\lambda} \biggr)^p \sum_{m \in \ZZ} \abs{a(m)}^p w(m) 
\end{align*}
\end{proof}

\begin{theorem}{\label{Sak_Thm_7.3}} 
If $ w \in A_p, 1 < p < \infty$, then $M$ is bounded on $\ellw^p(\ZZ)$.
\end{theorem}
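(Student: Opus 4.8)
The plan is to upgrade the weak-type $(p,p)$ inequality of Theorem~\ref{Sak_Thm_7.1} to the strong-type bound by Marcinkiewicz interpolation, after first using the self-improving (``open'') property of the $A_p$ classes to produce a weak-type estimate at an exponent strictly below $p$. One then interpolates that estimate against the trivial $\ellw^\infty$ bound to cover the exponent $p$ itself.

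First, note that if $\seq{w}\in A_p$ with $p<\infty$ then $w(n)>0$ for every $n\in\ZZ$, since otherwise the factor $\big(\frac{1}{\abs{I}}\sum_{k\in I}w(k)^{-1/(p-1)}\big)^{p-1}$ in the $A_p$ condition would be infinite. Hence $\ellw^\infty(\ZZ)$ coincides with $\ell^\infty(\ZZ)$, and the pointwise bound $Ma(m)\le\normb{a}_\infty$ shows that $M$ maps $\ellw^\infty(\ZZ)$ into itself with norm at most $1$. Next I would invoke the discrete reverse Hölder inequality for the dual weight $\sigma=w^{-1/(p-1)}$ (which lies in $A_{p'}$): there are $\delta>0$ and $C$, depending only on $p$ and the $A_p$ constant of $w$, such that
\[ \Big(\frac{1}{\abs{I}}\sum_{k\in I}\sigma(k)^{1+\delta}\Big)^{\frac{1}{1+\delta}}\le \frac{C}{\abs{I}}\sum_{k\in I}\sigma(k)\qquad\text{for every interval } I\subset\ZZ. \]
Choosing $q$ with $q-1=(p-1)/(1+\delta)$, so that $1<q<p$, and combining this inequality with the $A_p$ condition for $w$ yields $\seq{w}\in A_q$, exactly as in the continuous case. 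The reverse Hölder inequality itself is proved by the standard Calderón--Zygmund stopping-time argument applied to $\sigma$ over dyadic intervals of $\ZZ$, the numerical constants differing slightly from the continuous situation because of the structure of dyadic intervals in $\ZZ$ (cf.\ \cite{anup1}, \cite{anup2}).

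With such a $q\in(1,p)$ fixed, Theorem~\ref{Sak_Thm_7.1} applied with exponent $q$ to the non-negative sequence $\{\abs{a(n)}\}$ gives the weak type $(q,q)$ bound
\[ w(\{m\in\ZZ:\ Ma(m)>\lambda\})\le \frac{C}{\lambda^{q}}\sum_{m\in\ZZ}\abs{a(m)}^{q}w(m). \]
Thus $M$ is a sublinear operator that is simultaneously of weak type $(q,q)$ and of (strong, hence weak) type $(\infty,\infty)$ on the measure space $(\ZZ,w\,dn)$, so the Marcinkiewicz interpolation theorem (already used in the proof of Theorem~\ref{Sak_Thm_2.16}) produces, for $q<p<\infty$, a constant $C_p$ with $\normb{Ma}_{\ellw^p(\ZZ)}\le C_p\normb{a}_{\ellw^p(\ZZ)}$; since $\normb{Ma}_{\ellw^p(\ZZ)}=\normb{M\abs{a}}_{\ellw^p(\ZZ)}$, this is the assertion.

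The only non-routine ingredient is the open-ended property $A_p(\ZZ)\subset\bigcup_{1<q<p}A_q(\ZZ)$, i.e.\ the reverse Hölder inequality for sequences in $A_p(\ZZ)$; I expect this to be the main obstacle, as it requires adapting the classical reverse Hölder argument to the discrete dyadic structure of $\ZZ$ (or quoting it from \cite{anup1}). Once it is available, the remainder is the standard interpolation scheme carried out for the continuous maximal operator in \cite{Duan1}.
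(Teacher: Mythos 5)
Your proposal is correct, and it is in fact more complete than the argument the paper records. The paper disposes of this theorem in one line: ``the proof follows from Theorem~\ref{Sak_Thm_7.1} and Marcinkiewicz interpolation.'' Taken literally, that is not enough: Theorem~\ref{Sak_Thm_7.1} gives the weak $(p,p)$ bound at the \emph{same} exponent $p$, and interpolating it against the trivial $\ell^\infty_w$ bound only yields strong type $(r,r)$ for $r>p$, never at $p$ itself. You correctly identify the missing ingredient --- the openness property of the $A_p$ classes, i.e.\ $w\in A_p(\ZZ)$ implies $w\in A_q(\ZZ)$ for some $q<p$, obtained from a reverse H\"older inequality for the dual weight $\sigma=w^{-1/(p-1)}\in A_{p'}$ --- and then run the interpolation from the weak $(q,q)$ estimate of Theorem~\ref{Sak_Thm_7.1} together with $\normb{Ma}_{\ell^\infty_w(\ZZ)}\le\normb{a}_{\ell^\infty_w(\ZZ)}$. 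This is exactly the classical scheme in \cite{Duan1}, transplanted to $\ZZ$, and your reduction $q-1=(p-1)/(1+\delta)$ and the verification that $w>0$ everywhere (so $\ell^\infty_w=\ell^\infty$) are both fine. The only caveat is that the discrete reverse H\"older inequality is asserted rather than proved, and the references \cite{anup1}, \cite{anup2} do not obviously contain it; you should either carry out the dyadic Calder\'on--Zygmund stopping-time argument on $\ZZ$ as you sketch, or deduce it from the continuous case by passing to the step weight $W(x)=w(\lfloor x\rfloor)$ on $\RR$, whose $A_p$ constant is comparable to that of $\seq{w}$. With that point settled, your argument is a correct (and needed) completion of the paper's proof.
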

The proof follows from Theorem[$\ref{Sak_Thm_7.1}$] and Marcinkiewicz interpolation theorem.

 \section{ Maximal Ergodic Operator}
Let $(X,\textbf{B},\mu)$ be a probability space and $U$ an invertible measure preserving transformation on $X$.
We define maximal ergodic operator as 
\[ \tilde{M} f(x) = \sup_{J \geq 1} \frac{1}{2J+1} \sum_{k =-J}^J  \abs{f(U^{-k} x)  } \]

For any positive integer $J$, we also define truncated maximal ergodic operator as 
\[ \tilde{M}_J f(x) = \sup_{ 1 \leq n \leq J} \frac{1}{2n+1} \sum_{k =-n}^n  \abs{f(U^{-k} x)   } \]
%
%
%

In the following theorem using transference, we prove that the maximal ergodic operator is bounded on $L^p_w(X, \mathcal{B}, \mu) , 1 <  p < \infty$ where $w$ is ergodic $A_p$ weight and the  maximal ergodic operator satisfies weak type (1,1) inequality on $L^1_w(X, \mathbf{B}, \mu)$ space.
 
\begin{theorem}{\label{Sak_thm_Ergodic_oneway}}
Let $(X,\textbf{B},\mu)$ be  a probability space and $U$ an invertible measure preserving transformation on $X$.
satisfies
\begin{enumerate}
    \item   If $w$ is an ergodic $A_p$  weight, $ 1 <  p < \infty$ and $f \in L^p_w(X,\mathcal{B}, \mu) $, then the maximal ergodic operator \[ \normb{\tilde{M} f(x)}_{L^p_w(X)} \leq C_p \normb{f}_{L^p_w(X)} \quad \text{if} \quad 1 < p < \infty \]
    \item If w is an ergodic $A_1$ weight and $f \in L^1_w(X,\mathcal{B}, \mu) $ , then \[\int_{\left \{ x \in X: \abs{\tilde{M} f(x)} > \lambda \right \} } w(x) d\mu(x) \leq \frac{C}{\lambda} \int_X \abs{f(x)} w(x) d\mu(x) \]
\end{enumerate}
\end{theorem}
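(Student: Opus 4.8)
The plan is to use the Calderon--Coifman--Weiss transference principle to pull the weighted inequalities for the discrete Hardy--Littlewood maximal operator $M$ (established in Theorem~\ref{Sak_Thm_7.3} and Theorem~\ref{Sak_Thm_7.1}) back to the ergodic setting. First I would work with the truncated operators $\tilde M_J$, since these are given by a finite averaging process and so transference applies cleanly; the full operator $\tilde M$ is then recovered by monotone convergence (letting $J\to\infty$) once the bounds are uniform in $J$. Fix $f\in L^p_w(X)$ and a large integer $J$. For $\mu$-a.e.\ fixed $x\in X$, define the sequence $a_x(n) = f(U^{-n}x)$ and the weight sequence $w_x(n) = w(U^{-n}x)$ for $n\in\ZZ$. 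The ergodic $A_p$ condition on $w$ is precisely the statement that, for a.e.\ $x$, the sequences $w_x$ lie in $A_p(\ZZ)$ with an $A_p$ constant bounded by $C$ uniformly in $x$ (the essential supremum in the definition guarantees uniformity). Moreover, for $|n|\le N$, one has the pointwise identity $\tilde M_N f(U^{-n}x) \le M^{\prime}_{?}\,a_x$-type control, more precisely $\tilde M_J f(U^{-n}x) = (M^{\prime}_{J} a_x)(n) \le (M a_x)(n)$ by Lemma~\ref{Sak_lem_2.11}, so the ergodic maximal function along the orbit is dominated by the discrete maximal function of the orbit sequence.

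The key computational step is the transference identity relating integrals over $X$ to weighted sums over a long interval of integers. Concretely, for a fixed large integer $L\gg J$ and the interval $I_L=\{-L,\dots,L\}$, I would integrate the local inequality over $X$: since $U$ is measure preserving,
\[
(2L+1)\int_X \tilde M_J f(x)^p w(x)\,d\mu(x) = \int_X \sum_{n\in I_L} \tilde M_J f(U^{-n}x)^p w(U^{-n}x)\,d\mu(x).
\]
On the inner sum, for $n$ in the shrunken interval $I_{L-J}$, the value $\tilde M_J f(U^{-n}x)$ depends only on $a_x(m)$ for $m\in I_L$, so it is bounded by $(M \tilde a_x)(n)$ where $\tilde a_x$ is $a_x$ restricted (cut off) to $I_L$. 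Applying the discrete strong-$(p,p)$ bound of Theorem~\ref{Sak_Thm_7.3} to $\tilde a_x\in\ell^p_{w_x}(\ZZ)$ gives $\sum_{n\in I_{L-J}} (M\tilde a_x)(n)^p w_x(n) \le C_p \sum_{m\in I_L} |f(U^{-m}x)|^p w_x(m)$; integrating over $X$, reusing measure-invariance, and dividing by $(2L-2J+1)$ yields
\[
\frac{2L-2J+1}{2L+1}\int_X \tilde M_J f(x)^p w(x)\,d\mu \le C_p \,\frac{2L+1}{2L+1}\int_X |f(x)|^p w(x)\,d\mu.
\]
Letting $L\to\infty$ (with $J$ fixed) kills the ratio $\tfrac{2L-2J+1}{2L+1}\to 1$, giving $\|\tilde M_J f\|_{L^p_w(X)}\le C_p^{1/p}\|f\|_{L^p_w(X)}$ with $C_p$ independent of $J$; then $J\to\infty$ and the monotone convergence theorem finish part (1). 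For part (2) the argument is identical except that one applies the discrete weak-$(p,p)$ bound (Theorem~\ref{Sak_Thm_7.1} with $p=1$, valid for ergodic $A_1$ weights since the ergodic $A_1$ condition transfers to $w_x\in A_1(\ZZ)$ uniformly), and one must handle the level-set measures via the transference of distribution functions: the set $\{x: \tilde M_J f(x)>\lambda\}$ along the orbit sits inside $\{n\in I_L: M\tilde a_x(n)>\lambda\}$ up to the boundary layer of size $J$, whose weighted measure is controlled and disappears in the limit $L\to\infty$.

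The main obstacle, and the step requiring the most care, is the boundary-layer estimate: when transferring from $X$ to the finite interval $I_L$, the truncated averages $\tilde M_J f(U^{-n}x)$ for $n$ near the ends of $I_L$ ``see'' points $U^{-m}x$ with $m\notin I_L$, so one cannot directly identify them with a discrete maximal function of a sequence supported in $I_L$. The standard fix is to restrict attention to the inner interval $I_{L-J}$ and absorb the discrepancy into the ratio $(2L-2J+1)/(2L+1)$, which tends to $1$; making this rigorous requires verifying that the contribution of the boundary indices is genuinely negligible after dividing by $2L+1$, which is why the truncation parameter $J$ must be fixed while $L\to\infty$ first. A secondary technical point is confirming that the ergodic $A_p$ (resp.\ $A_1$) condition indeed yields $w_x\in A_p(\ZZ)$ (resp.\ $A_1(\ZZ)$) for a.e.\ $x$ with a uniform constant --- this follows because every interval in $\ZZ$ is contained in a symmetric interval of comparable length, so the one-sided symmetric averages in the ergodic $A_p$ definition control all the two-sided averages appearing in the discrete $A_p$ definition, at the cost of a harmless constant factor.
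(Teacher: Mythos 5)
Your proposal is correct and follows the same Calderon--Coifman--Weiss transference scheme as the paper: pass to the truncated operator $\tilde M_J$, form the orbit sequences $a_x(k)=f(U^{-k}x)$ and $w_x(k)=w(U^{-k}x)$, dominate $\tilde M_J f(U^{-n}x)$ by the discrete maximal function via Lemma~\ref{Sak_lem_2.11}, average over a block of length about $2L+1$ using that $U$ is measure preserving, apply the discrete weighted inequalities, and let $L\to\infty$ so the edge factor tends to $1$, the constants being uniform in $J$. The differences are in execution, and they are worth recording. For part (1) you transfer the strong $(p,p)$ bound of Theorem~\ref{Sak_Thm_7.3} directly and then let $J\to\infty$ by monotone convergence; the paper instead runs the transference inside a weak-type estimate and invokes Marcinkiewicz interpolation, which is a detour (a single weak $(p,p)$ bound does not interpolate to strong $(p,p)$ without also using the openness $w\in A_{p-\epsilon}$, whereas the transference computation already yields the strong bound). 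For part (2) you transfer the discrete weak $(1,1)$ inequality of Theorem~\ref{Sak_Thm_7.1} at the level of distribution functions; this is the more robust endpoint argument, since the paper's chain for general $1\le p<\infty$ passes through the bound $\sum_m |Ma_x(m)|^p w_x(m)\le C\sum_m |a_x(m)|^p w_x(m)$, which is a strong-type inequality not available at $p=1$. You also truncate only the function sequence and keep the full weight $w_x$, restricting to the inner interval $I_{L-J}$, which is cleaner than truncating the weight (a weight vanishing off a finite interval is not in $A_p(\ZZ)$). One point to tighten: deducing $w_x\in A_p(\ZZ)$ uniformly from the ergodic $A_p$ condition requires re-centering, i.e.\ for each fixed $c\in\ZZ$ the ergodic condition holds a.e.\ at $U^{-c}x$, and a countable union of null sets gives, for a.e.\ $x$, the bound over all symmetric intervals centered at any $c$; an arbitrary interval is then enclosed in such a centered interval of at most roughly twice its length. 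Enclosing in intervals symmetric about the origin alone, as your closing remark suggests, would not give a comparable length.
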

\begin{proof}
Take $p ,  1\leq p < \infty$. Fix $J > 0$ and take a function $f \in L^p_w(X)$. 
\[ \tilde{M}_J f(x) = \sup_{ 1 \leq n \leq J} \frac{1}{2n+1}\sum_{k =-n}^n  \abs{ f(U^{-k} x)   } \]
It is enough to prove that $\tilde{M_J}$ satisfies (1) and (2) with constants not depending on $J$. Let $\lambda >0$ and put
\[ E_\lambda = \left \{ x \in X: \abs{\tilde{M_J} f(x)} > \lambda \right \} \]
For x lying outside a $\mu$ null set and a positive integer $L$,  define sequences
\begin{align*}
a_x(k)  = \begin{cases}
	 \,  f(U^{-k} x)  \quad if  \quad \abs{k} \leq L + J  \\
	  \, 0 \quad {otherwise} 
	\end{cases}
\end{align*}

\begin{align*}
w_x(k) = \begin{cases}
	 \,  w(U^{-k} x)  \quad if  \quad \abs{k} \leq L + J  \\
	  \, 0 \quad {otherwise} 
	\end{cases}
\end{align*}
Using  Lemma[$\ref{Sak_lem_2.11}$], observe that for an integer $m$ with $\abs{m}\leq L$
\[ \tilde{M}_J f(U^{-m} x) = M^\prime_J a_x(m) \leq M a_x(m) \]

Therefore,
\begin{align*}
  & w(\left \{ x \in X: \abs{\tilde{M}_J f(x) } > \lambda \right \})    = \int_{E_\lambda}  w(x) d\mu(x) = \frac{1}{\lambda^p} \int_{E_\lambda} \lambda^p w(x) d\mu(x) \\
  & \leq \frac{1}{\lambda^p} \int_{E_\lambda} \abs{ \tilde{M}_J f(x)}^p  w(x) d\mu(x) \\
    & \leq \frac{1}{\lambda^p} \int_X \abs{ \tilde{M}_J f(x)}^p  w(x) d\mu(x) \\
  & = \frac{1}{\lambda^p} \frac{1}{2L+1} \sum_{m =-L}^L \int_X \abs{ \tilde{M}_J f(U^{-m} x)}^p  w(U^{-m} x) d\mu(x) \\
     & \leq  \frac{1}{\lambda^p} \frac{1}{2L+1} \sum_{m =-L}^L \int_X \abs{  {M}  a_x(m) }^p  w_x(m) d\mu(x) \\
     & =  \frac{1}{\lambda^p} \frac{1}{2L+1}  \int_X \sum_{m =-L}^L \abs{  {M}  a_x(m) }^p  w_x(m) d\mu(x) \\
     & \leq  \frac{1}{\lambda^p} \frac{1}{2L+1}  \int_X \sum_{m =-\infty}^\infty \abs{  {M}  a_x(m) }^p  w_x(m) d\mu(x) \\
     & \leq  \frac{1}{\lambda^p} \frac{1}{2L+1}  \int_X \sum_{m =-\infty}^\infty \abs{    a_x(m) }^p  w_x(m) d\mu(x) \\
& = \frac{1}{\lambda^p} \frac{1}{2L+1}  \int_X    \sum_{m =-(L+J)}^{(L+J)}   \abs{    a_x(m) }^p  w_x(m) d\mu(x) \\
    & = \frac{1}{\lambda^p} \frac{1}{2L+1}  \int_X    \sum_{m =-(L+J)}^{(L+J)} \abs{  f(U^{-m} x) }^p  w(U^{-m} x) d\mu(x) \\
       & \leq \frac{1}{\lambda^p} \frac{1}{2L+1} \sum_{m =-(L+J)}^{(L+J)} \int_X  \abs{  f(U^{-m} x) }^p  w(U^{-m} x) d\mu(x) \\
   & \leq \frac{1}{\lambda^p} \frac{1}{2L+1} \sum_{m =-(L+J)}^{(L+J)} \int_X  \abs{  f(U^{-m} x) }^p  w(U^{-m} x) d\mu(x) \\
       & = \frac{1}{\lambda^p} \frac{1}{2L+1} \sum_{m =-(L+J)}^{(L+J)} \int_X  \abs{  f( x) }^p  w( x) d\mu(x) \\
    & \leq  \frac{C}{\lambda^p} \frac{1}{2L+1} 2(L+J)+1 \normb{f}^p_{L^p_w(X)} \\
     & \leq  \frac{C}{\lambda^p} (\frac{2L}{2L+1} +  \frac{2J+1}{2L+1} ) \normb{f}^p_{L^p_w(X)} \\
    & \leq \frac{C}{\lambda^p} \normb{f}^p_{L^p_w(X)}
\end{align*}

by choosing $L$ appropriately.  Conclusion $(1)$ of the theorem  now follows by using the Marcinkiewicz interpolation theorem.
 \end{proof}

Now, we prove the converse of Theorem[$\ref{Sak_thm_Ergodic_oneway}$] for $p>1$ with the additional assumptions (1) $(X, \mathcal{B}, \mu)$ is a probability space and (2) $U$ is ergodic measure preserving transformation.
Using transference method, we prove the converse of Theorem[$\ref{Sak_thm_Ergodic_oneway}$]. A direct proof can be seen in \text{\cite{Att1}}. For this we require the concept of ergodic rectangles which we define below \text{\cite{Att1}}.
\begin{definition}[Ergodic Rectangle]
Let $E$ be a subset of $X$ with positive measure and let $K \geq 1$ be such that $ U^i E \cap U^j E = \phi \quad \text{if} \quad  i\neq j $ and $-K \leq i,j \leq K$. Then the set $R = \cup_{i=-K}^K U^i E$ is called ergodic rectangle of length $2K+1$ with base $E$.
\end{definition} 
For the proof of following lemma[$\ref{sak_ergodic_rect}$], refer{\text{\cite{Att1}}}.
\begin{lemma}{\label{sak_ergodic_rect}}
Let $(X,\textbf{B}, \mu)$ be a probability space, $U$ an ergodic invertible measure preserving transformation on $X$ and $K$ a positive integer.
\begin{enumerate}
    \item If $ F \subseteq X $ is a set of positive measure then there exists a subset $E \subseteq F$ of positive measure such that $E$ is base of an ergodic rectangle of length $2K+1$. 
    \item There exists a countable family $\left \{ E_j \right \} $ of bases of ergodic rectangles of length $2K+1$ such that $X = \cup_j E_j$.
\end{enumerate}
\end{lemma}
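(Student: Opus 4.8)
The plan is to deduce both parts from the Kakutani--Rokhlin skyscraper (tower) construction; we may assume $(X,\mathbf{B},\mu)$ is non-atomic, since if $\mu$ has an atom then ergodicity of $U$ forces $X$ to be a finite cyclic rotation, for which both statements are elementary. \textbf{Part (1).} First I would choose a measurable set $A\subseteq F$ with $0<\mu(A)<\tfrac{1}{2K+1}$ (possible by non-atomicity). On $A$ define the first-return time $n_A(x)=\min\{k\ge 1:U^k x\in A\}$, finite $\mu$-a.e.\ on $A$ by Poincar\'e recurrence, and set $A_n=\{x\in A:n_A(x)=n\}$. The standard skyscraper argument shows the sets $U^jA_n$ ($n\ge 1$, $0\le j\le n-1$) are pairwise disjoint and $\bigsqcup_{n,j}U^jA_n=X$ modulo a null set --- this last point is exactly where ergodicity enters, since $\bigcup_{j\ge 0}U^jA$ is, up to null sets, a $U$-invariant set containing $A$ and hence has full measure. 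Kac's formula then gives $\sum_{n\ge 1}n\,\mu(A_n)=\mu(X)=1$. If $\mu(A_n)=0$ for all $n\ge 2K+1$, then $1=\sum_{n=1}^{2K}n\,\mu(A_n)\le 2K\,\mu(A)<\tfrac{2K}{2K+1}<1$, a contradiction, so there is $n_0\ge 2K+1$ with $\mu(A_{n_0})>0$. Put $E=A_{n_0}$: then $E\subseteq A\subseteq F$, $\mu(E)>0$, and $U^0E,U^1E,\dots,U^{2K}E$ lie in distinct levels of the column of height $n_0$, hence are pairwise disjoint; applying the bijection $U^{-K}$ shows $U^{-K}E,\dots,U^{K}E$ are pairwise disjoint, i.e.\ $E$ is a base of an ergodic rectangle of length $2K+1$.

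\textbf{Part (2).} I would obtain the covering family by an exhaustion argument based on Part (1). Since $\mu$ is finite, any family of pairwise disjoint sets of positive measure is at most countable, so a maximal family can be built greedily: having chosen disjoint bases $E_1,\dots,E_m$ of ergodic rectangles of length $2K+1$, if $\mu\bigl(X\setminus\bigcup_{i\le m}E_i\bigr)>0$ then by Part (1) the supremum $s_m$ of $\mu(E)$ over bases $E\subseteq X\setminus\bigcup_{i\le m}E_i$ is strictly positive, and I pick $E_{m+1}\subseteq X\setminus\bigcup_{i\le m}E_i$ a base with $\mu(E_{m+1})>s_m/2$. If this stops after finitely many steps we are done; otherwise it yields a countable disjoint family $\{E_j\}_{j\ge 1}$ with $\sum_j\mu(E_j)\le 1$, so $\mu(E_j)\to 0$ and hence $s_j\to 0$. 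If $Y:=X\setminus\bigcup_jE_j$ had positive measure, Part (1) would give a base $E^{\ast}\subseteq Y$ with $\mu(E^{\ast})>0$; but $E^{\ast}\subseteq X\setminus\bigcup_{i<j}E_i$ for every $j$, so $\mu(E^{\ast})\le s_{j-1}\to 0$, forcing $\mu(E^{\ast})=0$ --- a contradiction. Hence $X=\bigcup_jE_j$ up to a $\mu$-null set, and absorbing that null set into $E_1$ (a harmless change at the mod-null level at which these ergodic-theoretic statements are read) gives the stated equality.

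The step I expect to require the most care is the skyscraper construction underpinning Part (1): verifying that the tower exhausts $X$ (the only place ergodicity, rather than mere measure preservation, is genuinely needed) and arranging, via the choice $\mu(A)<\tfrac{1}{2K+1}$ together with Kac's formula, that some column is tall enough to carry a rectangle of the prescribed length $2K+1$. Once Part (1) is in hand, Part (2) is routine, the only mild nuisance being the mod-null bookkeeping converting ``covers $X$ modulo null'' into the exact identity $X=\bigcup_jE_j$.
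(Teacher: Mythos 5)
Your argument is essentially correct, but note that the paper itself does not prove this lemma: it states it and refers the reader to \cite{Att1} for the proof, so there is no in-paper argument to compare against. Your route --- build the Kakutani skyscraper over a small set $A\subseteq F$ with $0<\mu(A)<\tfrac{1}{2K+1}$, use ergodicity to see that the tower exhausts $X$ (hence Kac's formula $\sum_n n\,\mu(A_n)=1$), and conclude that some return-time set $A_{n_0}$ with $n_0\ge 2K+1$ has positive measure and serves as the base $E$ --- is a standard, complete proof of part (1), and your greedy exhaustion correctly yields part (2); it would make the paper self-contained where it currently is not. Two caveats. First, your dismissal of the atomic case is too quick: if $\mu$ has an atom, ergodicity indeed forces $X$ to be a finite cyclic rotation, but there the statement is \emph{false} whenever $2K+1$ exceeds the number of atoms (any $E$ of positive measure contains a periodic atom, so the sets $U^iE$, $-K\le i\le K$, cannot be pairwise disjoint); non-atomicity (equivalently, aperiodicity) is an implicit hypothesis inherited from the setting of \cite{Att1} and should be stated rather than argued away. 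Second, in part (2) you cannot simply absorb the residual null set into $E_1$: enlarging $E_1$ by a null set can destroy the exact disjointness $U^iE_1\cap U^jE_1=\emptyset$ required by the paper's definition of a base, so the honest conclusion is $X=\bigcup_j E_j$ up to a $\mu$-null set --- which is all the paper ever uses, since the lemma is invoked later only to derive almost-everywhere statements.
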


\begin{theorem}
    Let $(X , \mathcal{B}, \mu)$ be a probability space, U an invertible ergodic measure preserving transformation on $X$. If $\tilde{M} f$ is bounded on $L^p_w(X)$ for some $1 < p < \infty$, then $w \in A_P(X)$.
\end{theorem}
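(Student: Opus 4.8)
The plan is to transfer the boundedness of $\tilde M$ on $L^p_w(X)$ back to the integers, scale by scale, and recover along almost every orbit the discrete $A_p$ inequality, which is precisely the ergodic $A_p$ condition; this gives the converse of Theorem~\ref{Sak_thm_Ergodic_oneway}(1). Write $C=\|\tilde M\|_{L^p_w(X)\to L^p_w(X)}$. Fix an integer $N\ge 1$ and a truncation level $m\ge 1$, and set
\[ \sigma_m(y)=\min\bigl(w(y)^{-1/(p-1)},m\bigr),\qquad S_m(x)=\sum_{j=-N}^N\sigma_m(U^jx),\qquad T(x)=\sum_{j=-N}^N w(U^jx). \]
Here $\sigma_m$ is defined everywhere (equal to $m$ where $w$ vanishes), and $S_m(x)>0$ a.e.; we assume $w>0$ a.e., the only nondegenerate case, and note that this is in any case forced a posteriori by the bound obtained below together with ergodicity.

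The core of the argument is a test-function construction on each ergodic rectangle. Let $E$ be a base of an ergodic rectangle $R=\bigcup_{i=-N}^N U^iE$ of length $2N+1$; by Lemma~\ref{sak_ergodic_rect}(1) such an $E$ exists inside any set of positive measure. I would apply $\tilde M$ to $g_m:=\sigma_m\,\chi_R$. Since $\sigma_m$ is bounded and $\mu$ is finite, $g_m\in L^p_w(X)$, and using the pointwise inequality $\sigma_m(y)^p w(y)\le\sigma_m(y)$ together with the measure-preserving identities $\int_{U^jE}h\,d\mu=\int_E h\circ U^j\,d\mu$ one gets $\|g_m\|_{L^p_w(X)}^p\le\int_E S_m\,d\mu$. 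For the lower bound, fix $x\in E$ and $|i|\le N$: choosing $J=N+|i|$ in the supremum defining $\tilde M$ and observing that the index set $\{i-N,\dots,i+N\}$ is contained in $\{-J,\dots,J\}$, all $2N+1$ nonnegative terms $g_m(U^jx)=\sigma_m(U^jx)$, $|j|\le N$, are captured, so $\tilde M g_m(U^ix)\ge\frac{1}{2J+1}S_m(x)\ge\frac{1}{4N+1}S_m(x)$. Summing $|\tilde M g_m|^p w$ over the $2N+1$ disjoint layers $U^iE$ of $R$ and again changing variables yields $\|\tilde M g_m\|_{L^p_w(X)}^p\ge\frac{1}{(4N+1)^p}\int_E S_m^p\,T\,d\mu$. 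Feeding both bounds into $\|\tilde M g_m\|_{L^p_w}\le C\|g_m\|_{L^p_w}$ gives, for every such base $E$,
\[ \int_E\Bigl(S_m(x)^p\,T(x)-C^p(4N+1)^pS_m(x)\Bigr)\,d\mu(x)\le 0. \]

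The next step is to delocalize this to an a.e.\ pointwise statement. The integrand $\phi_{N,m}:=S_m^pT-C^p(4N+1)^pS_m$ lies in $L^1(X)$ (since $S_m$ is bounded and $T$ is integrable), so if it were positive on a set of positive measure, Lemma~\ref{sak_ergodic_rect}(1) would supply a base $E$ of an ergodic rectangle of length $2N+1$ inside that set with $\int_E\phi_{N,m}>0$, contradicting the displayed inequality. Hence $\phi_{N,m}\le 0$ a.e., and dividing by $S_m(x)>0$ gives $S_m(x)^{p-1}T(x)\le C^p(4N+1)^p$ for a.e.\ $x$. Letting $m\to\infty$ and applying monotone convergence, $S(x)^{p-1}T(x)\le C^p(4N+1)^p$ a.e., where $S(x)=\sum_{j=-N}^N w(U^jx)^{-1/(p-1)}$; dividing by $(2N+1)^p$ and using $\frac{4N+1}{2N+1}\le 2$,
\[ \Bigl(\tfrac{1}{2N+1}\sum_{k=-N}^N w(U^kx)\Bigr)\Bigl(\tfrac{1}{2N+1}\sum_{k=-N}^N w(U^kx)^{-1/(p-1)}\Bigr)^{p-1}\le(2C)^p\qquad\text{for a.e. }x. \]
Discarding the countably many exceptional null sets, one per $N$, makes this hold for all $N\ge 1$ off a single null set, which is exactly the ergodic $A_p$ condition with constant $(2C)^p$.

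I expect the main obstacle to be the transference construction itself: arranging that $\tilde M g_m$, evaluated along the whole orbit column $\{U^ix:|i|\le N\}$, reproduces the \emph{full} average $S_m(x)$ rather than merely its value at a single point, and then correctly matching the integrals of $|g_m|^pw$ and $|\tilde M g_m|^pw$ layer by layer through the measure-preserving changes of variable. The truncation in $m$ is a routine but genuinely needed device to keep $g_m$ in $L^p_w(X)$ when $w^{-1/(p-1)}$ is not integrable, and the delocalization via Lemma~\ref{sak_ergodic_rect}(1) together with the limit $m\to\infty$ should be comparatively soft.
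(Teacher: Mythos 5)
Your proof is correct, and it takes a genuinely different route from the paper's. The paper transfers the hypothesis into a discrete statement: for an arbitrary sequence $a$ it builds a function supported on an ergodic rectangle via $f(U^{-k}x)=a(k)$, deduces that the Hardy--Littlewood maximal operator $M$ is bounded on $\ell^p_{w_x}(\ZZ)$ along a.e.\ orbit with constant independent of $x$ and $J$, and then invokes the discrete implication ``$M$ bounded on $\ell^p_{w_x}(\ZZ)$ $\Rightarrow$ $w_x\in A_p(\ZZ)$'' to conclude; its localization-to-a.e.\ step runs through arbitrary measurable subsets $F$ of a fixed base together with part (2) of Lemma \ref{sak_ergodic_rect} (covering $X$ by countably many bases). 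You instead test $\tilde M$ directly on the truncated dual weight $\min(w^{-1/(p-1)},m)\chi_R$ over a rectangle of length $2N+1$, which reproduces, after the layer-by-layer change of variables, exactly the two averages in the ergodic $A_p$ product, and you delocalize by applying part (1) of the lemma to the putative bad set and arguing by contradiction. What your approach buys: it is self-contained (no appeal to the unproved discrete maximal-implies-$A_p$ step), it uses only countably many test functions (indexed by $N$ and $m$), so the a.e.\ exceptional sets are handled cleanly — in the paper's argument the null set a priori depends on the sequence $a$, which strictly speaking requires an extra density/countability argument — and the $m$-truncation correctly deals with possible non-integrability of $w^{-1/(p-1)}$. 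What the paper's approach buys is the stronger intermediate conclusion that the discrete weighted maximal inequality itself transfers along a.e.\ orbit, which mirrors the forward direction and gives uniform $A_p(\ZZ)$ constants for the orbit weights $w_x$. Minor quibbles only: your ``monotone convergence'' step is really just a pointwise monotone limit of finite sums, and the degenerate case $w=0$ on a set of positive measure is, as you note, excluded a posteriori by your bound plus ergodicity (or by the standing convention that a weight is positive a.e.), so neither affects correctness.
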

\begin{proof}
For the given function $w$ on $X$, for a.e $x \in X$ define the sequence $w_x(k) = w(U^{-k} x) $.
We shall prove that
\[ esssup_{ x \in X} \biggl( \frac{1}{\abs{I}} \sum_{k \in I} \abs{w_x(k)} \biggr) \biggl( \frac{1}{\abs{I}} \sum_{k \in I} \abs{w_x(k)}^{p^\prime -1} \biggr)^{p-1} \leq C \]
This will prove that $ w \in A_p(X)$. In order to prove this, we shall prove that the Hardy-Littlewood maximal operator $M$ is bounded on $\ell^p_{w_x}(\ZZ)$ and
\[ \normb{Ma}_{\ell^p_{w_x}(\ZZ)} \leq C_p \normb{a}_{\ell^p_{w_x}(\ZZ)} \]  where $C_p$ is independent of $x$.
In order to prove the above inequality, take a sequence $a=\seq{a} \in \ell^p_{w_x}(\ZZ)$. 

Let $R = \cup_{k =-2J}^{2J} U^k E$ be an ergodic rectangle of length $4J+1$ with base $E$. Let $F$ be any measurable subset of $E$. Then $F$ is also base of an
ergodic rectangle of length $4J+1$. Let $R^\prime = \cup_{k=-2J}^{2J} U^k F$.
Define function $f$ and $w$ as follows.
\begin{align*}
f(U^{-k} x)   = \begin{cases}
	 \,  a(k)  \quad if  \quad x \in F \quad \text{and} -J \leq k \leq  J  \\
	  \, 0 \quad {otherwise} 
	\end{cases}
\end{align*}

Then,
   \begin{align*}
       \normb{f}^p_{L^p_w(X)} & = \int_{X} \abs{f(x)}^p w(x) d\mu(x) = \int_{R^\prime} \abs{f(x)}^p w(x) d\mu(x) \\
       & = \sum_{k=-J}^J \int_{U^{k} F} \abs{f(x)}^p w(x) d\mu(x) \\
 & = \sum_{k=-J}^J \int_{ F} \abs{f(U^{-k} x) }^p w(U^{-k} x) d\mu(x) \\
  & = \sum_{k=-J}^J \int_{ F} \abs{ a(k) }^p w_x(k) d\mu(x) \\
& =\int_F \biggl( \sum_{k =-J}^J \abs{a(k)}^p w_x(k) \biggr) d\mu(x) \\
& \leq \normb{a}_{\ell^p_{w_x}(\ZZ)} \mu(F)
   \end{align*}

Using  Lemma[$\ref{Sak_lem_2.11}$], it is easy to observe that for $-J \leq m \leq J$ and $ x \in F$
\[ \tilde{M_J} f(U^{-m} x) =M^\prime_J a(m) \geq \frac{1}{3} M_J a(m) \]
Now,
\begin{align*}
C \normb{f}^p_{L^p_w(X)}  & \geq \int_X \abs{\tilde{M_J} f(x) }^p w(x) d\mu(x) \\
& =  \int_{R^\prime} \abs{\tilde{M_J} f(x) }^p w(x) d\mu(x) \\
& =  \sum_{k=-J}^{J} \int_{U^{k}F}  \abs{\tilde{M_J} f(x) }^p w(x) d\mu(x) \\
& =  \sum_{k=-J}^{J} \int_{F}  \abs{\tilde{M_J} f(U^{-k} x) }^p w(U^{-k} x) d\mu(x) \\
& =   \sum_{k=-J}^{J}  \int_{F} \abs{{M^\prime_J a}(k) }^p w_x(k)  d\mu(x) \\
& =  \int_{F}  \sum_{k=-J}^{J}  \abs{{M^\prime_J a}(k) }^p w_x(k)  d\mu(x) \\
& \geq \frac{1}{3} \int_{F} \sum_{k=-J}^{J}  \abs{{M_J a}(k) }^p w_x(k)  d\mu(x) 
   \end{align*}
So from the above estimates
\[ \frac{1}{\mu(F)}  \int_{F} \sum_{k=-J}^{J}  \abs{{M_J a}(k) }^p w_x(k)  d\mu(x) \leq C \normb{a}_{\ell^p_{w_x}(\ZZ)} \]
Since $F$ was an arbitrary subset of $E$, we get 
\[    \sum_{k=-J}^{J}  \abs{{M_J a}(k) }^p w_x(k)   \leq C \normb{a}_{\ell^p_{w_x}(\ZZ)} \] a.e $x \in E$. Since $U$ is ergodic, $X$ can be written as countable union of bases of ergodic rectangles of length $4J+1$. Therefore for a.e $ x \in X$,
\[    \sum_{k=-J}^{J}  \abs{{M_J a}(k) }^p w_x(k)   \leq C \normb{a}_{\ell^p_{w_x}(\ZZ)} \]
Since $C$ is independent of $J$, a.e $x \in X$,
\[    \sum_{k \in \ZZ}  \abs{{M a}(k) }^p w_x(k)   \leq C \normb{a}_{\ell^p_{w_x}(\ZZ)} \]
It follows that the sequence $\seq{w_x}$ as defined by $w_x(k) = w(U^k x)$ belongs to $A_p(\ZZ)$ a.e $x \in X$ and $A_p$ weight constant for $w_x$ is independent of $x$ so that $w \in A_p(X)$.
\end{proof}

\section{Conclusion} 
The study of  maximal ergodic operator on $L^p_w( X, \mathcal{B}, \mu)$ spaces paves the way to study this operator on variable $L^{\pdot}(X,\mathcal{B},\mu)$ spaces. Using Rubio de Francia extrapolation method ${\text{\cite{David5}}}$ and appropriate variable Holder's inequality, we hope to achieve this result.

\end{document}